\documentclass[12pt,reqno]{amsart}

\usepackage{amsmath,amssymb,amscd,graphicx, amsthm,
  enumerate, hyperref} 
      \setlength{\textwidth}{\paperwidth}
\addtolength{\textwidth}{-1.9in}
\calclayout

\usepackage{mathrsfs}
\usepackage{mathtools}
\usepackage[all]{xy}
\usepackage{amsmath}
\usepackage{color,xcolor}
\definecolor{darkred}{rgb}{1,0,0} 
\definecolor{darkgreen}{rgb}{0,0.8,0}
\definecolor{darkblue}{rgb}{0,0,1}

\hypersetup{colorlinks,
linkcolor=darkblue,
filecolor=darkgreen,
urlcolor=darkred,
citecolor=darkgreen}

\setlength{\smallskipamount}{6pt}
\setlength{\medskipamount}{10pt}
\setlength{\bigskipamount}{16pt}
 

\numberwithin{equation}{section}

\theoremstyle{plain}

\theoremstyle{plain}
\newtheorem{theorem}{Theorem}
\numberwithin{theorem}{section}
\newtheorem{proposition}[theorem]{Proposition}
\newtheorem{lemma}[theorem]{Lemma}

\theoremstyle{definition}


\theoremstyle{definition}
\newtheorem{remark}[theorem]{Remark}

\newcommand{\Real}{\mathrm{Re}}



\newcommand{\interior}[1]{%
  {\kern0pt#1}^{\mathrm{o}}%
}


\newtheoremstyle{named}{}{}{\itshape}{}{\bfseries}{.}{.5em}{\thmnote{#3 }#1}
\theoremstyle{named}

\DeclareMathOperator{\rad}{rad}

\DeclareMathOperator{\Dom}{Dom}


\newcommand{\norm}[1]{\left\lVert#1\right\rVert}



\newcommand{\R}{\mathbb{R}}

\newcommand{\C}{\mathbb{C}}



\title[Algorithms for subelliptic multipliers in $\mathbb{C}^2$]{Algorithms for subelliptic multipliers in $\mathbb{C}^2$}
\author{Martino Fassina}
\address{Department of Mathematics, University of Illinois at Urbana-Champaign, 1409 West Green
Street, Urbana, IL 61801, USA}
\email{fassina2@illinois.edu}

\begin{document}
\begin{abstract} We give examples of pseudoconvex domains of finite type in $\C^2$ where the Kohn algorithm for subelliptic estimates fails to yield an effective lower bound for the order of subellipticity in terms of the type. We show how to modify the algorithm to obtain an effective procedure to prove subellipticity on domains of finite type in $\C^2$ with real analytic boundary satisfying a condition slightly stronger than pseudoconvexity. We close with a generalization to higher dimensions.
\end{abstract}
\subjclass[2010]{Primary 32T25, 32W05. Secondary 	32T27, 32V15}
\keywords{Multiplier ideals, subelliptic estimates, $\bar\partial$-Neumann problem.}
\maketitle
\section{Introduction}
The theory of subelliptic estimates for the $\bar\partial$-Neumann problem in $\C^2$ is completely understood. For a pseudoconvex domain $\Omega$ in $\C^2$ with smooth boundary $b\Omega$, a subelliptic estimate holds at a boundary point $p$ if and only if $p$ is a point of finite type. Moreover, letting $\tau$ be the value of the type of $b\Omega$ at $p$, the order of subellipticity is precisely $1/\tau$. This statement combines deep work carried out in the 1970s by several authors, including Kohn \cite{K72}, Greiner \cite{G74}, Rothschild and Stein \cite{RS76}.

After these results in dimension two were established, Kohn introduced a procedure to prove subelliptic estimates in general dimensions \cite{K79}. His algorithm, based on the notion of subelliptic multipliers, remains the object of active research. In particular, it has recently been observed that the algorithm does not yield a lower bound for the order of subellipticity in terms of the type \cite{CD10}. This phenomenon of ``lack of effectiveness" has been intensively studied, and variants of the algorithm have been proposed to make it effective in some classes of domains \cite{CD10,S10,S17,KZ18}.
 
Since the results on subellipticity in $\C^2$ are conclusive and independent of Kohn's theory of multipliers, the behavior of the Kohn algorithm in $\C^2$ appears not to have been investigated. In this paper we first show that ineffectiveness occurs even in the two-dimensional situation. We then provide a modified algorithm that is effective on a fairly general class of pseudoconvex domains in $\C^2$ with real analytic boundary.  

Effectiveness fails when, in forming an ideal of subelliptic multipliers $I_j$ as prescribed by the algorithm, a radical is taken whose order cannot be estimated by a function of the type \cite{H08,CD10,S10,KZ18} (See Section \ref{s2} for the definition of the order of a radical). In order to study this phenomenon, we will keep track of the ideals that appear in the algorithm before taking radicals. We call them $I_j^{\sharp}$. Kohn's ideals of multipliers $I_j$ are obtained by taking real radicals of the $I_j^{\sharp}$ in the ring of germs of smooth functions at a point. 

 Proposition \ref{mainpro} exhibits a collection of pseudoconvex domains in $\C^2$ where the order of the radical needed when passing from $I_2^{\sharp}$ to $I_2$ cannot be estimated in terms of the type. Theorem \ref{mainteo} provides a modified but effective algorithm.

\begin{proposition}\label{mainpro}
Let $\Omega$ be a pseudoconvex domain in $\mathbb{C}^2$ defined locally near the origin by\[2\Real(z)+|w^{\tau}+z^kw^l|^2<0.\] Here $\tau, k, l$ are integers such that $k>\tau> l>0, \tau> 2$. The type of the boundary at $0$ is equal to $2\tau$. To obtain the ideal $I_2$ in the second step of the Kohn algorithm one needs to take a real radical of order at least $k$. In particular, the Kohn algorithm is not effective on $\Omega$.  
\end{proposition}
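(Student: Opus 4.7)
The plan is to run Kohn's algorithm through two steps on the domain $\Omega$, track both the pre-radical ideal $I_j^{\sharp}$ and the real radical $I_j$ at each stage, and then exhibit an element of $I_2$ whose presence forces a radical of order at least $k$. Writing $f = w^{\tau} + z^k w^l$ so that $r = 2\Real(z)+|f|^2$, I would first verify that the type at $0$ equals $2\tau$: restricting $r$ to the complex line $z=0$ yields $|w|^{2\tau}$, and the hypothesis $k>\tau$ prevents any other holomorphic curve through $0$ from exceeding this order. Since the complex Hessian of $r$ is the rank-one matrix $(f_i\,\overline{f_j})$, a direct evaluation of the Levi form along the tangential $(1,0)$-field $L = r_z\partial_w-r_w\partial_z$ produces $|f_w|^2$ up to a non-vanishing factor. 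Hence $I_1^{\sharp}=(r,|f_w|^2)$ and $I_1$ contains $f_w$ and $\overline{f_w}$.

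At the second step, the essentially new Jacobian-type generator is $D = \det(\partial r,\partial f_w) = r_z f_{ww} - r_w f_{wz}$. Because $r_z$ is a unit germ at $0$ and $r_w=\bar f\,f_w$ lies in $I_1\cdot C^{\infty}\subseteq I_2^{\sharp}$, one replaces $D$ by $f_{ww}$ modulo $I_2^{\sharp}$, giving $f_{ww}\in I_2^{\sharp}$. With $f_w=\tau w^{\tau-1}+lz^k w^{l-1}$ and $f_{ww}=\tau(\tau-1)w^{\tau-2}+l(l-1)z^k w^{l-2}$, the identity
\[
w f_{ww}-(\tau-1)f_w \;=\; l(l-\tau)\, z^k w^{l-1}
\]
together with $l\neq\tau$ forces $z^k w^{l-1}\in I_2^{\sharp}$, and subtracting from $f_w$ yields $w^{\tau-1}\in I_2^{\sharp}$. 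Consequently $f = w\cdot w^{\tau-1}+w\cdot z^k w^{l-1}\in I_2^{\sharp}$, so $|f|^2\in I_2^{\sharp}$ and hence $2\Real(z)=r-|f|^2\in I_2^{\sharp}$ as well.

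The heart of the argument is then to exhibit $g\in I_2$ whose $N$-th power fails to lie in $I_2^{\sharp}$ for any $N<k$. I would restrict the generators of $I_2^{\sharp}$ to the complex line $w=0$; on this slice the restricted ideal reduces to $(2\Real(z),z^k)$, with the $z^k$-generator coming from $f_w|_{w=0}$ when $l=1$, from $f_{ww}|_{w=0}$ when $l=2$, and from suitable manipulations of the remaining generators for larger $l$. The candidate element is $g=z$ (or an appropriately modified germ when $z\notin I_2$ itself); a Taylor-series argument matching the coefficients of $z^m\bar z^n$ in any putative relation $z^N = a(z,\bar z)\cdot 2\Real(z) + b(z,\bar z)\cdot z^k$ propagates a recursive cancellation along anti-diagonals $m+n=\text{const}$ that admits no solution for $N<k$: multiplication by $z+\bar z$ always introduces $\bar z$-monomials that can only be absorbed by contributions involving $z^k$, which forces $N\ge k$. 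The hard part will be this final Taylor-coefficient analysis, which must rule out \emph{all} smooth germs $a,b\in C^\infty$ (not merely polynomial ones), together with adapting the choice of $g$ for larger $l$ where $V(I_2^{\sharp})$ has positive dimension.
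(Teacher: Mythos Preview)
Your outline has a genuine gap in the computation of $I_1$, and this propagates to the key step. You only record that $f_w\in I_1$, but the real radical of $(r,|f_w|^2)$ is larger: writing $f_w=w^{\,l-1}h$ with $h=\tau w^{\tau-l}+lz^k$, one has $wh\in I_1$ (indeed $|wh|^{2(l-1)}\le C|f_w|^2$), and in fact $I_1=\mathcal I\bigl(r,\,wh,\,\bar w\bar h\bigr)$. Since $I_2^{\sharp}$ is generated by $I_1$ together with $L$ applied to a \emph{generating set} of $I_1$, your list of generators for $I_2^{\sharp}$ is missing $\partial_w(wh)=(\tau-l+1)\tau w^{\tau-l}+lz^k$. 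This is precisely the generator that restricts to $lz^k$ on $\{w=0\}$; by contrast, your generators $f_w$ and $f_{ww}$ both vanish on $\{w=0\}$ once $l\ge 3$, so your hand-wave ``suitable manipulations of the remaining generators for larger $l$'' cannot produce $z^k$ there. The lower-bound statement requires showing that $|z|^{k-1}$ is not dominated by \emph{any} element of $I_2^{\sharp}$, so you need an exhaustive generating set---a subset is not enough.

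A second issue is that your final step argues about the \emph{equality} $z^N=a\cdot 2\Real(z)+b\cdot z^k$ via Taylor coefficients, but the real radical is defined by the \emph{inequality} $|g|^N\le |F|$ for some $F\in I_2^{\sharp}$; these are different questions, and the Taylor recursion you describe does not address the inequality. The paper's route is much cleaner: once one has the correct generators, every $F\in I_2^{\sharp}$ satisfies $|F|\le \alpha|\Real(z)|+\beta|z|^k+\gamma|w|$ for suitable smooth $\alpha,\beta,\gamma$, and evaluating the putative inequality $|z|^{k-1}\le |F|$ at points $(ia,0)$ with $a\in\R$ small immediately gives a contradiction. No Taylor analysis is needed.
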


The domains described in Proposition \ref{mainpro} are similar to known examples of domains in higher dimensions where the Kohn algorithm is not effective \cite{CD10, S10, KZ18}. In those papers, the authors consider domains in $\C^n$, for $n\geq 3$, that are locally defined at $0$ by 
\begin{equation*}\label{special}
2\Real(z_n)+\sum_{j=1}^m|f_j(z_1,\dots,z_{n-1})|^2<0,
\end{equation*}
where the $f_j$ are holomorphic functions. On such domains the Kohn algorithm reduces to a procedure in the ring of germs of holomorphic functions at the origin \cite[Section 6.4.4]{D93}. Note that the functions $f_j$ do not depend on $z_n$, and the domains are therefore ``rigid", following the terminology of \cite{BRT85}. The domains considered in Proposition \ref{mainpro} and Theorem \ref{mainteo}, however, are not rigid in general. Hence, when applying the Kohn algorithm, we work in the ring of germs of real analytic functions, and take real rather than holomorphic radicals.

Proposition \ref{mainpro} establishes the ineffectiveness of the Kohn algorithm in $\C^2$. We next show how the algorithm can be modified to make it effective on a class of finite type domains with real analytic boundary satisfying a condition slightly stronger than pseudoconvexity. The condition is formulated in terms of a ``holomorphic decomposition" \eqref{define} of a defining function \cite[Section 3.3.1]{D93}. (At the end of the paper we show how to generalize to higher dimensions).

Recall that for a domain $\Omega$ in $\C^2$ with real analytic boundary $b\Omega$ and a point $p\in b\Omega$, there exists a choice of local coordinates such that $p$ is the origin and $\Omega$ is defined near $0$ by
\begin{equation}\label{define}
2\Real(z)+\norm{f}^2-\norm{g}^2<0.
\end{equation}
Here $f=(f_j)_{j=1}^{\infty}$ and $g=(g_j)_{j=1}^{\infty}$ are countable sequences of holomorphic functions defined in a neighborhood $U$ of the origin, with $\norm{f}^2=\sum_{j=1}^{\infty}|f_{j}|^2$ and $\norm{g}^2=\sum_{j=1}^{\infty}|g_{j}|^2$ real analytic functions in $U$.
We denote by $f_w$ and $g_w$ the sequences of partial derivatives $(\partial_wf_j)_{j=1}^{\infty}$ and $(\partial_wg_j)_{j=1}^{\infty}$ respectively.
Assume now that there exists $\delta\in[0,1)$ such that 
\begin{equation}\label{hypo}
\norm{g_w}^2\leq\delta\norm{f_w}^2 \text{ near }0.
\end{equation}
If \eqref{hypo} holds, then $\Omega$ is pseudoconvex near $0$ (Lemma \ref{mainlemma}). Moreover, a modified version of the Kohn algorithm is effective on $\Omega$. The new procedure involves a single real radical of order two at the first step.

\begin{theorem}\label{mainteo}
Let $\Omega$ be a domain in $\C^2$ with real analytic boundary $b\Omega$, and let $p\in b\Omega$ be a point of finite type $2\tau$. Choose coordinates so that $p$ is the origin and $\Omega$ is locally defined near $0$ by \eqref{define}. Assume that \eqref{hypo} holds in a neighborhood of the origin. Then there exists an effective algorithm to establish subellipticity at $0$ for $\Omega$. In particular, the procedure yields a sequence $\zeta_1,\dots, \zeta_{\tau}$ of subelliptic multipliers in the ring of germs $\mathcal{A}_0$ of real analytic functions at the origin, with $\zeta_{\tau}$ a unit in $\mathcal{A}_0$. The element $\zeta_1$ is obtained by taking a real radical of order 2, while for $j>1$ we have $\zeta_j=\partial_w\zeta_{j-1}$. 
\end{theorem}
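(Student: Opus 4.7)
The plan is to construct the sequence $\zeta_1,\dots,\zeta_\tau$ explicitly, exploiting hypothesis \eqref{hypo} and the decomposition \eqref{define}. I would first verify pseudoconvexity by a direct computation of the Levi coefficient: for $r$ as in \eqref{define} one has $r_{w\bar w}=\norm{f_w}^2-\norm{g_w}^2\geq(1-\delta)\norm{f_w}^2\geq 0$ by \eqref{hypo}, which proves Lemma \ref{mainlemma}. Consequently, the initial ideal $I_1^{\sharp}$ of Kohn's algorithm in $\C^2$ contains both $r$ and this Levi coefficient $\rho:=\norm{f_w}^2-\norm{g_w}^2$.

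The real-radical step is driven by the pointwise estimate
\[
|\partial_w f_j|^2\leq\norm{f_w}^2\leq\tfrac{1}{1-\delta}\rho\qquad\text{for every }j,
\]
which follows immediately from \eqref{hypo}. This places each holomorphic function $\partial_w f_j$ in the real radical of order $2$ of $(\rho)$, and hence in $I_1$. To select the right index, I would use \eqref{hypo} restricted to $z=0$ to argue that the minimum vanishing order in $w$ of the $g_j(0,w)$ is at least that of the $f_j(0,w)$; consequently $r(0,w)=\norm{f(0,w)}^2-\norm{g(0,w)}^2$ vanishes to order $2\nu$, where $\nu$ is the minimum vanishing order of the $f_j(0,w)$. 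The finite-type hypothesis then forces $\nu=\tau$ and produces an index $j_0$ with $f_{j_0}(0,w)=c w^{\tau}+O(w^{\tau+1})$, $c\neq 0$. Setting $\zeta_1:=\partial_w f_{j_0}\in I_1$ gives a multiplier that, along $z=0$, vanishes in $w$ to order exactly $\tau-1$.

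The iteration uses the standard Kohn determinant step in $\C^2$: for any scalar multiplier $\phi$, the $2\times 2$ determinant $r_z\partial_w\phi-r_w\partial_z\phi$ lies in the next multiplier ideal; since $r_z(0)=1$ is a unit near the origin, dividing by $r_z$ and absorbing the correction $(r_w/r_z)\partial_z\phi$ (using $r_w(0)=0$ and that $r$ itself is always a multiplier) extracts $\partial_w\phi$ as a new multiplier. Applied inductively to $\phi=\zeta_{j-1}=\partial_w^{j-1}f_{j_0}$, this yields $\zeta_j=\partial_w^{j}f_{j_0}$. After $\tau-1$ further steps one reaches $\zeta_\tau=\partial_w^{\tau}f_{j_0}$, whose value at the origin is $\tau!\,c\neq 0$; hence $\zeta_\tau$ is a unit in $\mathcal{A}_0$. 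Subellipticity at $0$ follows from Kohn's general theory as soon as a unit appears in the chain, and because only one real radical (of order $2$) and $\tau-1$ derivative steps are used, the procedure is effective and quantifiable by the type.

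The main obstacle I expect is the rigorous justification of the $w$-differentiation step in the non-rigid setting. In the rigid case $r_w\partial_z\phi$ vanishes automatically for $\phi$ holomorphic in $w$ alone, so $\partial_w\phi$ is extracted directly from the determinant. In general one must absorb the cross term into the running multiplier ideal; I would handle this by working modulo $(r)$ and maintaining an inductive control that $\partial_z\zeta_{j-1}$ is itself absorbed at each stage, for example by applying the same allowable-row construction to it in parallel, which guarantees that $\partial_w\zeta_{j-1}$ survives as a genuine scalar multiplier and the effective count of steps is preserved.
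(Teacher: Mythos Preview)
Your overall strategy---extract $\partial_w f_{j_0}$ from a radical of order $2$, then differentiate in $w$ repeatedly---matches the paper. However, there is a genuine gap at the first step. You write that the Levi coefficient is $r_{w\bar w}=\norm{f_w}^2-\norm{g_w}^2$ and that this ``proves Lemma~\ref{mainlemma}.'' It does not: in the non-rigid situation the functions $f_j,g_j$ depend on $z$, and the Levi form in the tangential basis $L=\partial_w-(r_w/r_z)\partial_z$ is
\[
\lambda=r_{w\bar w}|r_z|^2+r_{z\bar z}|r_w|^2-2\Real(r_{z\bar w}r_w r_{\bar z})
\]
(up to the unit $|r_z|^{-2}$). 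The quantity $\rho=r_{w\bar w}$ is \emph{not} an element of $I_1^{\sharp}=\mathcal I(r,\lambda)$, so the inequality $|\partial_w f_j|^2\leq(1-\delta)^{-1}\rho$ does not by itself place $\partial_w f_j$ in $\rad_{\R}I_1^{\sharp}$. The content of Lemma~\ref{mainlemma} is precisely to control the cross terms and obtain $\lambda\geq C\norm{f_w}^2$; only then does $|\partial_w f_j|^2\leq C^{-1}\lambda$ give the order-$2$ radical membership you need. The paper's proof of Lemma~\ref{mainlemma} bounds each cross term by $\norm{f_w}^2$ times a function vanishing at $0$, using Cauchy--Schwarz and \eqref{hypo}; this is where the real work lies and you have skipped it.

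A second point concerns the absorption of $(r_w/r_z)\partial_z\phi$ in the iteration. Your discussion here is vague (``working modulo $(r)$ and maintaining an inductive control that $\partial_z\zeta_{j-1}$ is itself absorbed''). The paper's argument is much cleaner and you should adopt it: one first observes, again via $\lambda\geq C\norm{f_w}^2\geq C\norm{g_w}^2$, that every $\partial_w g_j$ is also a multiplier; then
\[
r_w=\langle f_w,f\rangle-\langle g_w,g\rangle=\sum_j \bar f_j\,\partial_w f_j-\sum_j \bar g_j\,\partial_w g_j
\]
already lies in the multiplier ideal, so $Lh$ being a multiplier immediately gives $\partial_w h$ a multiplier. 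No parallel inductive bookkeeping on $\partial_z\zeta_{j-1}$ is required.
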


Our effective algorithm yields, for a domain of type $2\tau$, the lower bound  $(2^{\tau+1})^{-1}$ for the order of subellipticity at the origin. While very far from the known optimal bound of $(2\tau)^{-1}$, it seems to be the best effective bound that one can expect to obtain using subelliptic multipliers. 


The paper is organized as follows. In Section \ref{s2} we provide the necessary background and describe the Kohn algorithm. Section \ref{s3} deals with the phenomenon of ineffectiveness in $\C^2$ and contains the proof of Proposition \ref{mainpro}. In Section \ref{s4} we prove Theorem \ref{mainteo} and discuss the generalization to higher dimensions.

\section{The Kohn Algorithm}\label{s2}
We recall the algorithm for subelliptic multipliers introduced by Kohn in \cite{K79}. All the lemmas in this section are due to Kohn and are essentially contained in \cite[Proposition 4.7]{K79}. A more detailed exposition of the same material can be found in \cite[Chapter 6]{D93}. 

Let $\Omega$ be a bounded pseudoconvex domain in $\C^n$ with smooth boundary $b\Omega$. Recall that a {\em subelliptic estimate} holds on $(0,1)$ forms at a boundary point $p\in b\Omega$ if there exist a neighborhood $U$ of $p$ and positive constants $C,\epsilon$ such that the inequality
\begin{equation}\label{sub}
\norm{\phi}^2_{\epsilon}\leq C \Big{(}\norm{\bar\partial\phi}^2+\norm{\bar\partial^*\phi}^2+\norm{\phi}^2\Big{)}
\end{equation}
holds for all $(0,1)$ forms $\phi$ compactly supported in $U$ and in the domain of $\bar\partial^*$. Here $\bar\partial^*$ is the adjoint of $\bar\partial$ with respect to the standard $L^2$ inner product on $\Omega$. We denote by $\norm{\,\cdot\,}_{\epsilon}$ and $\norm{\,\cdot\,}$ the tangential Sobolev norm of order $\epsilon$ and the standard $L^2$ norm on $\Omega$ respectively.
We call the supremum of all the $\epsilon$ for which the estimate \eqref{sub} holds the {\em order of subellipticity at p}. 

\begin{remark}
By \cite[Proposition 3.10]{K79} one can use on the left side of \eqref{sub} either the full Sobolev $\epsilon$-norm or the tangential Sobolev $\epsilon$-norm (for the same $\epsilon$). The resulting inequalities are equivalent.
\end{remark}
Kohn defined the germ of a smooth function $f$ at $p$ to be a subelliptic multiplier if an estimate of the form \eqref{sub} holds when the $\phi$ on the left side of the inequality is replaced by $f\phi$. More precisely, $f$ is a {\em subelliptic multiplier} if there exist a neighborhood $U$ of $p$ and positive constants $C,\epsilon$ such that the inequality
\begin{equation*} 
\norm{f\phi}^2_{\epsilon}\leq C \Big{(}\norm{\bar\partial\phi}^2+\norm{\bar\partial^*\phi}^2+\norm{\phi}^2\Big{)}
\end{equation*}
holds for all $(0,1)$ forms $\phi$ compactly supported in $U$ and in the domain of $\bar\partial^*$. The supremum of all possible choices of $\epsilon$ is called the {\em order of the subelliptic multiplier $f$ at $p$.}

 Note that subellipticity at a point $p\in b\Omega$ is equivalent to $1$ being a subelliptic multiplier at $p$. Moreover, the order of subellipticity at $p$ is the same as the order of the subelliptic multiplier 1 at $p$.

\begin{remark}
Throughout this section, following \cite{K79}, we denote by $f$ the germ of a smooth function that is a subelliptic multiplier. This notation should not be confused with the $f$ arising from a holomorphic decomposition \eqref{define} of a defining function of the domain.
\end{remark}

\begin{lemma}\label{lemmar}
Let $f$ be a smooth function that vanishes on $b\Omega$. Then the germ of $f$ at a point $p\in b\Omega$ is a subelliptic multiplier of order $\epsilon=1$. 
\end{lemma}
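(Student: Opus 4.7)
The plan is to reduce the general statement to the special case where $f$ is itself a local defining function of $\Omega$, and then to exploit the fact that multiplication by a defining function forces Dirichlet boundary data on $r\phi$, which removes the boundary obstruction that otherwise prevents the Kohn-Morrey-H\"ormander estimate from reaching the full Sobolev $1$-norm.

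First, since $f$ is smooth and $f|_{b\Omega}=0$, near $p$ one may factor $f=hr$, with $r$ a smooth local defining function of $\Omega$ and $h$ smooth. Because $h$ is locally bounded, it suffices to prove the estimate
\[
\norm{r\phi}_1^2\leq C\bigl(\norm{\bar\partial\phi}^2+\norm{\bar\partial^*\phi}^2+\norm{\phi}^2\bigr)
\]
for every $(0,1)$-form $\phi$ compactly supported near $p$ and in $\Dom(\bar\partial^*)$; multiplying by $h$ afterwards only affects the constant.

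The main step then exploits the decisive consequence of the hypothesis: each coefficient of $r\phi$ vanishes on $b\Omega$, so $r\phi$ is a form with Dirichlet boundary data. For such forms the Morrey-Kohn-H\"ormander integration-by-parts identity produces the \emph{full} Sobolev $1$-norm on the left, since the boundary terms that ordinarily yield only the $1/2$-norm (and whose sign requires pseudoconvexity) now vanish identically. This should give the Dirichlet-type inequality
\[
\norm{r\phi}_1^2\leq C\bigl(\norm{\bar\partial(r\phi)}^2+\norm{\bar\partial^*(r\phi)}^2+\norm{r\phi}^2\bigr),
\]
which is essentially elliptic regularity for $\bar\partial\oplus\bar\partial^*$ acting on forms that vanish on $b\Omega$. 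Finally, the Leibniz rule yields
\[
\bar\partial(r\phi)=\bar\partial r\wedge\phi+r\,\bar\partial\phi,\qquad \bar\partial^*(r\phi)=r\,\bar\partial^*\phi+A(\phi),
\]
where $A$ is a zeroth-order operator whose bounded smooth coefficients are built from the first derivatives of $r$. Since $r$ and $\nabla r$ are bounded near $p$, the right-hand side of the Dirichlet estimate is dominated by $C(\norm{\bar\partial\phi}^2+\norm{\bar\partial^*\phi}^2+\norm{\phi}^2)$, completing the proof.

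The main obstacle is making the Dirichlet Sobolev inequality rigorous. One route is to mollify $r\phi$ and reduce to smooth forms compactly supported in $\Omega$, for which the interior identity $\norm{\bar\partial u}^2=\norm{\partial u}^2$ for scalar compactly supported $u$ delivers the full $1$-norm via Plancherel. A more self-contained route is to redo the Morrey-Kohn-H\"ormander computation directly with $r\phi$ in place of $\phi$ and observe that every boundary contribution carries a factor of $r|_{b\Omega}=0$; the remaining interior terms are then already of the desired form. Once this is established, the algebraic Leibniz step is routine.
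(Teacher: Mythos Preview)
The paper does not prove this lemma; it is listed among several background results attributed to Kohn and merely cited at the start of Section~\ref{s2} (the reader is referred to \cite[Proposition~4.7]{K79} and \cite[Chapter~6]{D93}). Your outline is essentially the standard argument found in those references: factor $f=hr$, use that $r\phi$ vanishes on $b\Omega$ so the boundary contributions in the Morrey--Kohn--H\"ormander integration by parts drop out and one controls the full gradient of $r\phi$, and then unwind via the Leibniz rule. Both routes you propose for the Dirichlet-type inequality are valid, and the commutator $[\bar\partial^{*},r]$ is indeed zeroth order, so the sketch is correct.
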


Let $C^{\infty}_p$ denote the local ring of germs of smooth functions at $p$, and let $I$ be an ideal in $C^{\infty}_p$. Its real radical $\rad_{\R}I$ consists of all $g\in C^{\infty}_p$ such that $|g|^k\leq |f|$ near $p$ for some positive integer $k$ and some $f\in I$. We say that the real radical $\rad_{\R}I$ is {\em of order m} if $m$ is the least integer $M$ for which
 \[\rad_{\R}I=\Big{\{}g\in C^{\infty}_p\, \big{|} \,|g|^k\leq |f| \text{ near $p$ for some $f\in I$ and some positive integer $k\leq M$}\Big{\}}.\]

\begin{lemma}\label{lemmarad}
The collection $\mathcal{E}_p$ of elements of $C^{\infty}_p$ that are subelliptic multipliers is an ideal in $C^{\infty}_p$. Moreover, if $g\in C^{\infty}_p$ is such that $|g|^k\leq |f|$ near $p$ for some subelliptic multiplier $f\in C^{\infty}_p$ of order $\epsilon$, then $g$ is a subelliptic multiplier of order $\epsilon/k$. In particular, $\mathcal{E}_p$ is a real radical ideal, that is, $\mathcal{E}_p=\rad_{\R}\mathcal{E}_p$.
\end{lemma}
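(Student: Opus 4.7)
The plan is to verify the three assertions of Lemma~\ref{lemmarad} in sequence: that $\mathcal{E}_p$ is closed under the $C^\infty_p$-module operations, the quantitative order estimate under the pointwise domination $|g|^k \leq |f|$, and finally the real-radical closure (which will follow immediately from the second step).

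For the ideal property, I would check closure under sums and under multiplication by smooth germs. If $f_1,f_2 \in \mathcal{E}_p$ have orders $\epsilon_1,\epsilon_2$, the triangle inequality at $\epsilon = \min(\epsilon_1,\epsilon_2)$ combined with the monotonicity of tangential Sobolev norms gives
\[
\norm{(f_1+f_2)\phi}_\epsilon \leq \norm{f_1\phi}_\epsilon + \norm{f_2\phi}_\epsilon \leq C\bigl(\norm{\bar\partial\phi}^2 + \norm{\bar\partial^*\phi}^2 + \norm{\phi}^2\bigr)^{1/2}.
\]
Closure under multiplication by $h \in C^\infty_p$ reduces to the standard pseudodifferential fact that multiplication by a smooth function is bounded on the tangential Sobolev space $H^\epsilon$ (for $\epsilon \in [0,1]$), so the multiplier estimate for $f$ immediately passes to $hf$ at the same order.

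The heart of the argument is the order bound: assuming $|g|^k \leq |f|$ near $p$ and $f \in \mathcal{E}_p$ of order $\epsilon$, I want to conclude that $g$ is a multiplier of order $\epsilon/k$. My approach is operator-theoretic. Let $\Lambda^s$ denote the tangential Bessel potential and set
\[
A = M_{\bar g}\,\Lambda^{2\epsilon/k}\,M_g, \qquad B = M_{\bar f}\,\Lambda^{2\epsilon}\,M_f,
\]
both positive and self-adjoint, so that (modulo lower-order remainders from the pseudodifferential calculus) $\langle A\phi,\phi\rangle = \norm{g\phi}_{\epsilon/k}^2$ and $\langle B\phi,\phi\rangle = \norm{f\phi}_\epsilon^2$, the latter being bounded by $C(\norm{\bar\partial\phi}^2 + \norm{\bar\partial^*\phi}^2 + \norm{\phi}^2)$ by the order-$\epsilon$ hypothesis on $f$. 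A principal-symbol computation combined with the pointwise inequality $|g|^{2k} \leq |f|^2$ and G\aa{}rding's inequality yields the operator inequality $A^k \leq B$ modulo strictly lower-order operators. The Heinz inequality (operator monotonicity of $t \mapsto t^{1/k}$ for $k \geq 1$) then promotes this to $A \leq B^{1/k}$, and Jensen's inequality (concavity of the same power applied to the spectral measure of $B$) gives $\langle B^{1/k}\phi,\phi\rangle \leq \langle B\phi,\phi\rangle^{1/k}\norm{\phi}_0^{2(k-1)/k}$. Raising to the $k$-th power and absorbing $\norm{\phi}_0^2$ into the Neumann quadratic form yields $\norm{g\phi}_{\epsilon/k}^{2k} \leq C(\norm{\bar\partial\phi}^2 + \norm{\bar\partial^*\phi}^2 + \norm{\phi}^2)^k$, and extracting roots delivers the subelliptic multiplier estimate for $g$ at order $\epsilon/k$.

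The real-radical closure $\mathcal{E}_p = \rad_\R \mathcal{E}_p$ is then immediate: if $g \in C^\infty_p$ satisfies $|g|^k \leq |f|$ near $p$ for some $f \in \mathcal{E}_p$, the previous paragraph places $g \in \mathcal{E}_p$, so $\rad_\R \mathcal{E}_p \subseteq \mathcal{E}_p$; the reverse inclusion is trivial. The main obstacle is the rigorous justification of $A^k \leq B$: since $M_g$ does not commute with $\Lambda^s$, the identity $\sigma(A^k) = |g|^{2k}|\xi'|^{2\epsilon}$ holds only at the level of principal symbols, and the commutator remainders (of order strictly below $2\epsilon$) must be prevented from obstructing the Heinz--Jensen chain. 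In practice they can be absorbed using the closure under smooth multiples established in the first step together with a standard small-constant/large-constant argument, but managing this bookkeeping is the technical core of Kohn's original proof.
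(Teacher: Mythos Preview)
The paper gives no proof of this lemma: it attributes all of the lemmas in Section~\ref{s2} to Kohn and refers the reader to \cite[Proposition~4.7]{K79} and to \cite[Chapter~6]{D93} for details. There is therefore no in-paper argument to compare your proposal against.

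Judged on its own, your operator-theoretic route is workable, but one point in the logic should be reordered. The L\"owner--Heinz step requires a genuine operator inequality, not one that holds only ``modulo lower order,'' so the G\aa{}rding remainder has to be absorbed \emph{before} you take the $k$-th root. Concretely: sharp G\aa{}rding applied to $B-A^k$, whose principal symbol $(|f|^2-|g|^{2k})\,|\xi'|^{2\epsilon}$ is non-negative, already gives the honest inequality $A^k \le B + C\,\mathrm{Id}$, since for $\epsilon\le 1/2$ the remainder has non-positive tangential order and is dominated by $\lVert\phi\rVert_0^2\le Q(\phi)$. Operator monotonicity of $t\mapsto t^{1/k}$ then yields $A\le (B+C\,\mathrm{Id})^{1/k}$, after which your Jensen/H\"older step and $\lVert\phi\rVert_0^2\le Q(\phi)$ finish the estimate. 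This absorption has nothing to do with ``closure under smooth multiples''; that remark in your last paragraph is misplaced.

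For comparison, the argument in the references the paper cites is more elementary and does not invoke operator-monotone functions: Kohn handles the quantitative bound by direct manipulation of $\langle \Lambda^{2\epsilon/k} g\phi,\, g\phi\rangle$, using that the commutator $[\Lambda^s, M_g]$ drops one order, together with the pointwise inequality $|g|^k\le |f|$ and Cauchy--Schwarz. Your approach trades that hands-on commutator bookkeeping for a single appeal to the Heinz inequality; both arrive at the order $\epsilon/k$.
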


 In order to describe Kohn's procedure to generate subelliptic multipliers, we exploit the notion of vector multiplier. This terminology is due to Siu \cite{S10}, although the concept is already present in \cite{K79}. Vector multipliers are sometimes called ``allowable one forms" \cite{D93}.
 
  We say that a $(1,0)$ form $v=\sum_{j=1}^n v_jdz_j$ with smooth coefficients $v_j$ defined in a neighborhood of $p$ is a {\em vector multiplier} if there exist a neighborhood $U$ of $p$ and positive constants $C,\epsilon$ such that the inequality
\begin{equation}\label{eqvectmult}
\norm{\sum_{j=1}^n v_j\phi_j}^2_{\epsilon}\leq C \Big{(}\norm{\bar\partial\phi}^2+\norm{\bar\partial^*\phi}^2+\norm{\phi}^2\Big{)}
\end{equation}
holds for all $(0,1)$ forms $\phi=\sum_{j=1}^n\phi_j d\bar{z}_j$ compactly supported in $U$ and in the domain of $\bar\partial^*$. The supremum of all possible choices of $\epsilon$ is called the {\em order of subellipticity of $v$ at $p$.}

\begin{lemma}\label{lemmavectormult}
If $f\in C^{\infty}_p$ is a subelliptic multiplier of order $2\epsilon$, then $\partial f$ is a vector multiplier of order $\epsilon$.
\end{lemma}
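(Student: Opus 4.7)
The approach hinges on the product-rule identity
\[
(\partial f)\phi \;=\; f\bar\partial^*\phi \;-\; \bar\partial^*(f\phi),
\]
valid up to smooth zeroth-order terms coming from a defining function of the domain; it is an immediate consequence of the local expression $\bar\partial^*\psi = -\sum_j\partial_{z_j}\psi_j$ plus the product rule, and one notes that $f\phi\in\Dom(\bar\partial^*)$ whenever $\phi\in\Dom(\bar\partial^*)$.

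To establish \eqref{eqvectmult} with $v=\partial f$, introduce a tangential pseudodifferential operator $\Lambda$ of order one and write $\norm{(\partial f)\phi}_\epsilon^2 \sim \bigl(\Lambda^{2\epsilon}(\partial f)\phi,(\partial f)\phi\bigr)$. Substituting the identity into the first slot and integrating the resulting $\bar\partial^*$ by parts (the boundary term vanishes because $f\phi\in\Dom(\bar\partial^*)$) yields, up to commutator corrections,
\[
\bigl(f\bar\partial^*\phi,\Lambda^{2\epsilon}(\partial f)\phi\bigr) \;-\; \bigl(\Lambda^{2\epsilon}(f\phi),\bar\partial[(\partial f)\phi]\bigr).
\]
The key observation is that $\bar\partial[(\partial f)\phi] = \sum_{j,k}(\partial_{\bar z_k}\partial_{z_j}f)\phi_j\,d\bar z_k + \sum_j(\partial_{z_j}f)\bar\partial\phi_j$ is pointwise dominated by $|\phi|+|\bar\partial\phi|$ with smooth coefficients. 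Thus, by Cauchy--Schwarz and the hypothesis that $f$ is a subelliptic multiplier of order $2\epsilon$,
\[
\bigl|\bigl(\Lambda^{2\epsilon}(f\phi),\bar\partial[(\partial f)\phi]\bigr)\bigr|
\;\lesssim\; \norm{f\phi}_{2\epsilon}\bigl(\norm{\phi}+\norm{\bar\partial\phi}\bigr)
\;\lesssim\; Q(\phi,\phi),
\]
where $Q(\phi,\phi)=\norm{\bar\partial\phi}^2+\norm{\bar\partial^*\phi}^2+\norm{\phi}^2$. The remaining first pairing, together with the commutator $[\Lambda^{2\epsilon},\bar\partial^*](f\phi)$, generates terms of the form $\norm{(\partial f)\phi}_\epsilon\cdot Q(\phi,\phi)^{1/2}$, which Young's inequality $ab\leq\eta a^2+C_\eta b^2$ with small $\eta$ absorbs into the left side to produce the desired estimate.

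\textbf{Main obstacle.} The technical challenge is the careful accounting of the commutators with $\Lambda^{2\epsilon}$ (which must have tangential order exactly $2\epsilon$, not $2\epsilon+1$) and of the boundary contributions from integration by parts, requiring the standard tangential pseudodifferential calculus adapted to the $\bar\partial$-Neumann problem. The factor of two relating the order $2\epsilon$ of $f$ to the order $\epsilon$ of $\partial f$ comes precisely from splitting $\Lambda^{2\epsilon}=\Lambda^\epsilon\cdot\Lambda^\epsilon$ in the Cauchy--Schwarz step.
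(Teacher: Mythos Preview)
The paper does not actually supply a proof of this lemma: it is stated as one of several results ``due to Kohn and essentially contained in \cite[Proposition 4.7]{K79},'' with \cite[Chapter 6]{D93} given as a further reference. So there is no in-house argument to compare against; your sketch is to be measured against Kohn's original proof, and indeed the product-rule identity $(\partial f)\cdot\phi=f\vartheta\phi-\vartheta(f\phi)$ together with integration by parts and tangential pseudodifferential calculus is exactly the mechanism Kohn uses.

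That said, two points in your sketch need tightening. First, the claim that $\bar\partial[(\partial f)\phi]$ is \emph{pointwise} dominated by $|\phi|+|\bar\partial\phi|$ is not correct: the term $\sum_j(\partial_{z_j}f)\,\bar\partial\phi_j$ involves all the individual barred derivatives $\partial_{\bar z_k}\phi_j$, not just their antisymmetric combinations. What is true, and what you need, is the $L^2$ bound $\sum_{j,k}\norm{\bar L_k\phi_j}^2\lesssim Q(\phi,\phi)$ coming from the basic Kohn--Morrey estimate on pseudoconvex domains; you should invoke that explicitly. Second, and more seriously, your treatment of the ``first pairing'' $\bigl(f\bar\partial^*\phi,\Lambda^{2\epsilon}(\partial f)\phi\bigr)$ is incomplete. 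Splitting $\Lambda^{2\epsilon}=\Lambda^\epsilon\Lambda^\epsilon$ and applying Cauchy--Schwarz gives a factor $\norm{f\bar\partial^*\phi}_\epsilon$, and this is \emph{not} controlled by $Q(\phi,\phi)^{1/2}$ for $\epsilon>0$ (you only know $\norm{\bar\partial^*\phi}\leq Q^{1/2}$, with no gain of tangential regularity). The standard fix is to integrate by parts once more---moving $\bar\partial^*$ back onto $\phi$ after commuting past $\Lambda^\epsilon$ and $f$---so that the resulting principal term involves $\norm{\bar\partial^*\phi}$ in $L^2$ rather than in $H^\epsilon$, with the commutators $[\Lambda^\epsilon,f]$ (order $\epsilon-1\le 0$) and $[\Lambda^\epsilon,\bar\partial^*]$ handled separately. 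Your ``Main obstacle'' paragraph gestures at this, but the body of the argument as written does not carry it out.
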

It is convenient to express vector fields and forms in local coordinates modeled on the domain $\Omega$. Let $L_1,\dots, L_n$ be a collection of smooth $(1,0)$ vector fields defined in a neighborhood $U$ of $p$, and denote by $\omega_1,\dots,\omega_n$ the dual basis of $(1,0)$ forms. That is, $\langle\omega_i,L_j\rangle_z=\delta_{ij}$ for every $z\in U$, where $\langle\, ,\,\rangle_z$ indicates the pairing between a form and a vector field at $z$. It is standard in $CR$ geometry to choose these vector fields such that the following properties are satisfied:
\begin{itemize}
\item At every point $z\in U$ the vector fields $L_1,\dots, L_{n}$ form an orthonormal basis of $T^{1,0}_z\C^n$, with $L_1,\dots, L_{n-1}$ being an orthonormal basis for $T^{1,0}b\Omega$.
\item $\omega_n=\partial r$.
\item For $z\in U$ and $i,j=1,\dots,n$, we have $\langle \omega_i,\omega_j\rangle_z=\delta_{ij}$. Here $\langle\, ,\, \rangle_z$ denotes the inner product induced on the vector space of $(1,0)$ forms at $z$ by the Hermitian metric.
\end{itemize}
In the local frame just described, the condition that a $(0,1)$ form is in the domain of $\bar\partial^*$ becomes easy to write. In fact 
\begin{equation*}
\sum_{j=1}^n\varphi_j \bar{\omega}_j\in \Dom(\bar\partial^*)\Longleftrightarrow \varphi_n=0\text{ on }b\Omega.
\end{equation*} 
Note that by Lemma \ref{lemmar} the function $\varphi_n$ is a subelliptic multiplier. Let $v$ be a vector multiplier. Since the component $\langle v,\omega_n\rangle$ in the estimate \eqref{eqvectmult} is multiplied by $\varphi_n$, it is natural to only keep track of the components $\langle v,\omega_j\rangle$ for $j=1,\dots,n-1$. We thus make the following definition.

We call a collection $(g_1,\dots,g_{n-1})$ of germs of smooth functions at $p$ an {\em allowable row of order} $\epsilon$ if the $(1,0)$ form $\sum_{j=1}^{n-1}g_j\omega_j$ is a vector multiplier of order $\epsilon$. Since for a germ $f\in C_{p}^{\infty}$ we can write $\partial f=\sum_{j=1}^{n}(L_j f)\,\omega_j$, Lemma \ref{lemmavectormult} can be restated as follows.

\begin{lemma}\label{l1}
If $f\in C^{\infty}_p$ is a subelliptic multiplier of order $2\epsilon$, then $(L_1f,\dots, L_{n-1}f)$ is an allowable row of order $\epsilon$.
\end{lemma}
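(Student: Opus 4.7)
The plan is to bootstrap Lemma~\ref{lemmavectormult} into the allowable-row estimate by isolating the $n$th component and absorbing it into the right-hand side. Starting from Lemma~\ref{lemmavectormult}, if $f$ is a subelliptic multiplier of order $2\epsilon$, then $\partial f$ is a vector multiplier of order $\epsilon$. Expanding $\partial f=\sum_{j=1}^n (L_j f)\,\omega_j$ in the dual frame and pairing with a test form $\phi=\sum_{j=1}^n\phi_j\bar\omega_j\in\Dom(\bar\partial^*)$ compactly supported near $p$, the vector-multiplier inequality \eqref{eqvectmult} reads
\[
\Big\|\sum_{j=1}^n (L_j f)\,\phi_j\Big\|_\epsilon^2\leq C\Big(\|\bar\partial\phi\|^2+\|\bar\partial^*\phi\|^2+\|\phi\|^2\Big).
\]

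To obtain the allowable-row estimate, which involves only $j=1,\dots,n-1$, I would use the triangle inequality
\[
\Big\|\sum_{j=1}^{n-1}(L_j f)\,\phi_j\Big\|_\epsilon\leq \Big\|\sum_{j=1}^n(L_j f)\,\phi_j\Big\|_\epsilon+\|(L_n f)\,\phi_n\|_\epsilon
\]
and then control the extra term $\|(L_n f)\phi_n\|_\epsilon$ by the same right-hand side. The crucial point, as noted in the paragraph preceding the statement, is that $\phi\in\Dom(\bar\partial^*)$ is equivalent in the chosen frame to $\phi_n|_{b\Omega}=0$. Since $L_n f$ is smooth and bounded near $p$, $\|(L_n f)\phi_n\|_\epsilon\leq C\|\phi_n\|_\epsilon$, and the vanishing of $\phi_n$ on $b\Omega$ lets me invoke (the proof of) Lemma~\ref{lemmar}, which yields $\|\phi_n\|_\epsilon^2\leq C(\|\bar\partial\phi\|^2+\|\bar\partial^*\phi\|^2+\|\phi\|^2)$ whenever $\epsilon\leq 1$ (which may be assumed, since no subelliptic multiplier has order larger than one).

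The main obstacle I anticipate is that Lemma~\ref{lemmar} is formally stated for a fixed germ vanishing on $b\Omega$, whereas $\phi_n$ above is a component of the test form and depends on $\phi$. This is only a notational hurdle: the proof of Lemma~\ref{lemmar} rests on the standard boundary-gain estimate valid for any function that vanishes on $b\Omega$, and that estimate applies pointwise to $\phi_n$ regardless of its origin. Combining the triangle-inequality decomposition with the two bounds above yields the desired allowable-row estimate for $(L_1 f,\dots,L_{n-1}f)$ of order $\epsilon$.
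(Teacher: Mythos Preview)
Your proposal is correct and matches the paper's approach: the paper does not give a separate proof but simply declares the lemma a restatement of Lemma~\ref{lemmavectormult}, relying on the preceding paragraph's observation that $\phi_n|_{b\Omega}=0$ so the $\omega_n$-component can be absorbed into the right-hand side. Your argument spells out exactly this absorption step, and your caveat about Lemma~\ref{lemmar} being stated for a fixed germ rather than the test-form component $\phi_n$ is well taken---the underlying Kohn estimate for the normal component is what both the paper and you are ultimately invoking.
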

We say that a matrix is {\em allowable of order} $\epsilon$ if its rows are allowable and $\epsilon$ is the minimum among the orders of the rows.

We denote by $\lambda=(\lambda_{ij})$ the matrix of the Levi form in the basis $L_1,\dots, L_{n-1}$. That is, for $z$ near $p$, we have
\begin{equation*}
\lambda_{ij}(z)=\langle\partial\bar\partial r, L_i\wedge \bar{L}_{j}\rangle_z,\quad i,j=1,\dots,n-1.
\end{equation*}
\begin{lemma}\label{lemmalambda}
The matrix $\lambda$ is allowable of order $1/2$. The determinant of an allowable matrix of order $\epsilon$ is a subelliptic multiplier of order $\epsilon$.
\end{lemma}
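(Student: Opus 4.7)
My plan addresses the two claims separately. Write $Q(\phi,\phi) := \|\bar\partial\phi\|^2+\|\bar\partial^*\phi\|^2+\|\phi\|^2$ for the right side of \eqref{sub}.

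For the first claim---that $\lambda$ is allowable of order $1/2$---I will start from the Kohn--Morrey integral identity. Applied to $\phi = \sum_j \phi_j \bar\omega_j \in \Dom(\bar\partial^*)$ compactly supported near $p$, it produces both interior $L^{2}$ control of the $\bar L_j\phi_k$ and the boundary integral
\[
\int_{b\Omega}\sum_{j,k=1}^{n-1}\lambda_{jk}\phi_j\bar\phi_k\,dS,
\]
all majorized by $Q(\phi,\phi)$. Pseudoconvexity makes $\lambda$ positive semi-definite, and Cauchy--Schwarz for such Hermitian forms gives the pointwise bound
\[
\Bigl|\sum_{k=1}^{n-1}\lambda_{ik}\phi_k\Bigr|^{2} \;\le\; \lambda_{ii}\sum_{j,k=1}^{n-1}\lambda_{jk}\phi_j\bar\phi_k,
\]
so after integration over $b\Omega$ each row $\sum_k\lambda_{ik}\phi_k$ is controlled in $L^{2}(b\Omega)$ by $Q(\phi,\phi)$. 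To upgrade this boundary $L^{2}$ bound to the required tangential $H^{1/2}$ bound on $\Omega$, I will follow \cite[Proposition~4.7]{K79}: a microlocal decomposition of $\phi$ in the tangential frequency variable, combined with the interior control of the $\bar L_j\phi_k$ already at hand, produces the characteristic half-derivative gain and yields $\bigl\|\sum_k \lambda_{ik}\phi_k\bigr\|_{1/2}^{2} \le CQ(\phi,\phi)$ for each $i$.

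For the second claim I will apply Cramer's rule. If $A=(a_{ij})$ is an allowable $(n-1)\times(n-1)$ matrix of order $\epsilon$ and $\mathrm{adj}(A)$ denotes its classical adjugate, then $\mathrm{adj}(A)\,A = \det(A)\,I$, so for each $k=1,\dots,n-1$,
\[
\det(A)\,\phi_k \;=\; \sum_{i=1}^{n-1}\mathrm{adj}(A)_{ki}\Bigl(\sum_{j=1}^{n-1}a_{ij}\phi_j\Bigr).
\]
The allowable hypothesis controls each inner sum in tangential $H^{\epsilon}$ by $CQ(\phi,\phi)^{1/2}$, and multiplication by the smooth cofactor $\mathrm{adj}(A)_{ki}$ is bounded on $H^{\epsilon}$; this handles $\|\det(A)\phi_k\|_\epsilon$ for $k<n$. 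For the normal component, $\phi_n$ vanishes on $b\Omega$ because $\phi\in\Dom(\bar\partial^*)$, and a standard $\bar\partial$-Neumann argument in the spirit of Lemma \ref{lemmar} (applied to $\phi_n$ in place of a fixed subelliptic multiplier) yields $\|\phi_n\|_1^{2}\le CQ(\phi,\phi)$, so $\|\det(A)\phi_n\|_\epsilon$ is likewise controlled. Summing over $k$ proves $\det(A)$ is a subelliptic multiplier of order $\epsilon$.

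The only genuinely non-routine step is the passage from an $L^{2}(b\Omega)$ bound to a tangential $H^{1/2}(\Omega)$ bound in the first claim: this is the essence of the $1/2$-derivative gain characteristic of the $\bar\partial$-Neumann problem and requires the microlocal machinery of \cite{K79}. Once the allowable-row hypothesis is in place, the second claim reduces to a purely linear-algebraic manipulation via the adjugate.
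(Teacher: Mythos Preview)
The paper does not supply its own proof of this lemma; it attributes the result to Kohn and cites \cite[Proposition~4.7]{K79}. Your outline is correct and is precisely Kohn's argument as found there (and in \cite[Chapter~6]{D93}): the Kohn--Morrey identity together with Cauchy--Schwarz for the positive semi-definite Levi form, followed by the microlocal half-derivative gain, gives the first claim, and Cramer's rule via the adjugate gives the second.
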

 
Combining the results stated above, Kohn formulated a procedure to generate multipliers. His algorithm yields an increasing sequence 
\begin{equation*}
I_1\subseteq I_2\subseteq I_3\subseteq\dots
\end{equation*}
of real radical ideals $I_k$ in $C^{\infty}_p$ consisting of subelliptic multipliers. The starting point is the allowable matrix $\lambda$. We call $M_0$ the collection of rows of $\lambda$. The first ideal of multipliers $I_1$ is defined as
\begin{equation*}
 I_1=\rad_{\R}I_1^{\sharp},\quad\text{where}\quad I_1^{\sharp}=\mathcal{I}(r,\det\lambda).
\end{equation*} 
Here $\mathcal{I}(\,\,)$ denotes the ideal generated in $C^{\infty}_p$ by the elements appearing inside the parentheses.
Exploiting Lemma \ref{l1}, we add new allowable rows to the set $M_0$, thus obtaining  \[M_1=M_0\cup\,\big{\{} (L_1f,\dots,L_{n-1}f)\,\vert\, f\in I_1\big{\}}.\] Taking determinants, we can now produce more multipliers. We define inductively, for $j\geq 2$,
\begin{equation*}
I_j^{\sharp}=\mathcal{I}(I_{j-1},\det M_{j-1}),\quad I_j=\rad_{\R}I_j^{\sharp}.
\end{equation*}
\begin{equation*}
M_j=M_{j-1}\cup\,\big{\{}(L_1f,\dots,L_{n-1}f)\,\vert\, f\in I_{j-1}\big{\}}.
\end{equation*}
Here $\det M_j$ denotes all determinants of $(n-1)\times(n-1)$ matrices whose rows belong to $M_j$. 

 When $b\Omega$ is real analytic, subellipticity at a boundary point $p$ is completely characterized by whether the Kohn algorithm yields 1 as a subelliptic multiplier in finitely many steps. 
 \begin{theorem}\cite[Theorem 6.27]{K79}
Let $\Omega$ be a bounded pseudoconvex domain in $\C^n$ with real analytic boundary $b\Omega$, and let $p\in b\Omega$. Then the following statements are equivalent.
\begin{enumerate}
\item A subelliptic estimate holds at $p$.
\item $1\in I_k$ for some $k$.
\item There is no germ of a complex analytic variety at $p$ lying in $b\Omega$.
\end{enumerate}
\end{theorem}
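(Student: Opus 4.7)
The plan is to prove the three equivalences cyclically as $(2)\Rightarrow(1)\Rightarrow(3)\Rightarrow(2)$. The implication $(2)\Rightarrow(1)$ is immediate: by Lemma \ref{lemmarad} every element of $I_k$ is a subelliptic multiplier, so $1\in I_k$ says that the constant function $1$ is itself a subelliptic multiplier, which is precisely subellipticity at $p$.

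For $(1)\Rightarrow(3)$ I would argue the contrapositive via the classical complex-variety obstruction. If a positive-dimensional germ $V$ of a complex analytic variety through $p$ lies in $b\Omega$, then pseudoconvexity forces the Levi form to vanish in directions tangent to $V$. One then constructs a sequence of $(0,1)$-forms $\phi_\nu$ supported in shrinking tubes around $V$ and lying in $\Dom(\bar\partial^*)$, for which $\norm{\bar\partial\phi_\nu}$, $\norm{\bar\partial^*\phi_\nu}$, and $\norm{\phi_\nu}$ remain bounded while $\norm{\phi_\nu}_\epsilon\to\infty$ for every $\epsilon>0$. The model forms are truncations of antiholomorphic data along $V$, and a standard rescaling argument witnesses the failure of \eqref{sub}.

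The substantive direction is $(3)\Rightarrow(2)$, where real analyticity enters essentially. Let $\mathcal{A}_p$ denote the ring of germs of real analytic functions at $p$. First one checks that the Kohn algorithm can be run entirely inside $\mathcal{A}_p$, since $r$ and the entries of $\lambda$ are real analytic, and the operations of taking derivatives, determinants, and real radicals preserve real analyticity. Let $V_k$ be the germ at $p$ of the common zero set of $I_k\cap\mathcal{A}_p$; each $V_k$ is a real analytic subgerm of $b\Omega$, and the descending chain $V_1\supseteq V_2\supseteq\cdots$ stabilizes to some $V_\infty$ by the Noetherian property of germs of real analytic sets.

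The key geometric claim, which I expect to be the main obstacle, is that $V_\infty$ is in fact a germ of a \emph{complex} analytic variety. Inductively, along $V_{k-1}$ the Levi form and its iterated tangential holomorphic derivatives vanish; applying Lemma \ref{l1} to each $f\in I_{k-1}$ and then forming determinants forces the Zariski tangent cone of $V_k$ at each of its points to be stable under $L_1,\dots,L_{n-1}$. Combined with pseudoconvexity and real analyticity, this upgrades $V_\infty$ to a complex analytic variety in $b\Omega$, in the spirit of Diederich--Fornaess. Hypothesis (3) then forces $V_\infty=\{p\}$, and a real analytic Nullstellensatz combined with one further iteration of the algorithm, whose first-order jet at $p$ is now nontrivial, yields a unit in some $I_k$, proving (2).
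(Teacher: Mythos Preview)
The paper does not contain a proof of this theorem; it is simply cited from Kohn's original paper \cite[Theorem 6.27]{K79} as background in Section~\ref{s2}. There is therefore no proof in the paper to compare your proposal against.

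That said, your sketch tracks the broad strategy of Kohn's original argument for $(3)\Rightarrow(2)$, but the ``key geometric claim'' is where the real work lies and your outline understates its difficulty. Kohn does not show directly that the stable locus $V_\infty$ is a complex analytic variety. Instead he introduces the \emph{holomorphic dimension} of a real analytic variety (the generic dimension of the Levi-null space of the complex tangent bundle along a stratum) and proves, via a delicate stratification argument, that either this dimension strictly decreases when passing from $V_k$ to $V_{k+1}$ or the top stratum of $V_k$ is already a complex analytic manifold lying in $b\Omega$. Your claim that the Zariski tangent cone of $V_k$ is ``stable under $L_1,\dots,L_{n-1}$'' is not what is established and would in fact be too strong: the correct statement concerns only the Levi-null directions tangent to the stratum, and even that requires careful use of pseudoconvexity to control mixed terms. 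The Diederich--Fornaess theorem you invoke concerns global compact varieties in real analytic boundaries and does not directly supply the local germ-level statement you need. Your treatments of $(2)\Rightarrow(1)$ and the contrapositive $(1)\Rightarrow(3)$ are fine as outlines.
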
 
Under the assumption that the boundary is real analytic, condition {\em (3)} is equivalent to $b\Omega$ being of finite type at $p$ in the sense of D'Angelo \cite[Theorem 4.4]{D93}. 
While the equivalence of finite type and subellipticity is known to hold more generally in the $C^{\infty}$ case \cite{C83, C87}, it remains an open problem whether condition {\em (2)} is necessary for subellipticity when the boundary is smooth but not real analytic. 

We clarify that by the {\em type} of a real hypersurface at a point we mean the maximum order of contact with one-dimensional complex varieties as defined by D'Angelo \cite{D82}. We refer to \cite{D93} for precise definitions and more information on domains of finite type. We remark that there are other ways to measure the type, but in $\C^2$ they all coincide \cite[Section 4.3.1]{D93}. 

The equivalence between finite type and subellipticity was proved for domains with smooth boundary in $\C^2$  before Kohn developed his theory of subelliptic multipliers. The following statement combines the work of several authors \cite{K72,G74,RS76} (see also \cite[Section 3]{CD10} and \cite{Kr79}).
\begin{theorem}\label{T2}
Let $\Omega$ be a smoothly bounded pseudoconvex domain in $\C^2$, and let $p\in b\Omega$ be a boundary point. The following are equivalent:
\begin{itemize}
\item There is a subelliptic estimate at $p$ with $\epsilon=1/m$ but for no larger value of $\epsilon$.
\item The type of the boundary at $p$ is equal to $m$.
\end{itemize}
\end{theorem}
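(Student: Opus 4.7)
The plan is to prove the two directions after a common local normalization. I would choose coordinates $(z_1,z_2)$ centered at $p$ in which $b\Omega$ is defined by
\[
r(z)=2\Real(z_1)+F(z_2,\bar z_2,\mathrm{Im}\,z_1)=0
\]
with $F$ vanishing to order at least $2$ at $0$; pseudoconvexity forces $F$ to be subharmonic in $z_2$ for each fixed $\mathrm{Im}\,z_1$, and in $\C^2$ D'Angelo's type at $0$ coincides with the order of vanishing of $F(\cdot,\cdot,0)$ at the origin, as all reasonable notions of type agree there. Choose a complex tangential $(1,0)$ vector field $L$ as in Section \ref{s2} and write $\lambda=\langle\partial\bar\partial r,L\wedge\bar L\rangle$ for the single Levi coefficient; the condition ``type $=m$" becomes ``$\lambda$ vanishes to order exactly $m-2$ at $0$".

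For sufficiency (type $=m\Rightarrow\epsilon=1/m$), I would follow the homogeneous model approach of Rothschild-Stein \cite{RS76}. Approximate $b\Omega$ near $0$ by the model $M=\{2\Real(z_1)+P(z_2,\bar z_2)=0\}$, where $P$ is the homogeneous degree-$m$ part of $F(\cdot,\cdot,0)$; pseudoconvexity makes $P$ subharmonic. The hypersurface $M$ carries a nilpotent Lie group structure compatible with the non-isotropic dilations $\delta_t(z_1,z_2)=(t^mz_1,tz_2)$, under which $\bar\partial_b$, its adjoint, and $L$ are homogeneous of degree $1$. Harmonic analysis on the associated nilpotent group then produces a sharp subelliptic estimate of order $\epsilon=1/m$ for the model, and the Rothschild-Stein lifting and approximation theorem transfers this estimate to the curved boundary $b\Omega$ with the same exponent. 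For comparison, the Kohn procedure of Section \ref{s2} applied to $\lambda$ generates multipliers $\lambda,L\lambda,\dots,L^{m-2}\lambda$ of orders $2^{-1},2^{-2},\dots,2^{-(m-1)}$, so $(L^{m-2}\lambda)(0)\neq 0$ yields $1\in I_k$ for some $k$ and hence subellipticity, but only with $\epsilon\geq 2^{-(m-1)}$—the algorithm alone does not see the sharp $1/m$.

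For necessity (type $>m'\Rightarrow\epsilon\leq 1/m'$), I would argue by contradiction using concentrated test forms. Suppose the type exceeds $m'$ and pick a non-singular complex curve $\varphi:\Delta\to\C^2$ with $\varphi(0)=p$ along which $r$ vanishes to order at least $m'$. Transport a fixed bump $(0,1)$-form through the anisotropic dilation $\delta_t$ adapted to $\varphi$ to obtain a family $\phi_t$ concentrated in a tube of height $t^{m'}$ transverse to $b\Omega$ and width $t$ along $\varphi$, choosing coefficients so that $\phi_t\in\Dom(\bar\partial^*)$. A direct scaling computation gives $\norm{\phi_t}_\epsilon\gtrsim t^{-\epsilon m'}\norm{\phi_t}$, whereas $\norm{\bar\partial\phi_t}^2+\norm{\bar\partial^*\phi_t}^2+\norm{\phi_t}^2\lesssim\norm{\phi_t}^2$ because every relevant derivative acts in a direction along which the characteristic length of $\phi_t$ is at least $t$ and because $r$ vanishes to order $\geq m'$ along $\varphi$. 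Plugging into \eqref{sub} and letting $t\to 0$ forces $\epsilon m'\leq 1$, contradicting $\epsilon>1/m'$.

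The hard part is the sharpness of the exponent in both directions. On the positive side, no multiplier-based scheme (in particular the Kohn algorithm of Section \ref{s2}) is known to produce the exact $\epsilon=1/m$; one genuinely needs the harmonic analysis of Rothschild-Stein on homogeneous nilpotent groups together with the delicate perturbation step that moves the model estimate to $b\Omega$. On the negative side, the subtlety is constructing test forms that simultaneously (i) lie in $\Dom(\bar\partial^*)$, (ii) are sharply localized at the correct anisotropic scale, and (iii) have $\bar\partial\phi_t$ and $\bar\partial^*\phi_t$ small enough that no power of $t$ is wasted; a careless choice typically recovers only $\epsilon\leq 2/m'$ or a similar weaker bound.
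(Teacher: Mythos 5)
The paper does not prove this theorem at all: it is quoted as a known combination of results of Kohn, Greiner, and Rothschild--Stein, with references given in place of a proof. So there is nothing internal to compare against; your sketch is an outline of the standard literature argument (nilpotent-group/model analysis for sufficiency, anisotropically scaled test forms for necessity), and at that level it identifies the right ingredients and the right places where the work lies.

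One step in your normalization is genuinely wrong as written and would break the sufficiency argument. You claim that in $\C^2$ the D'Angelo type at $0$ coincides with the order of vanishing of $F(\cdot,\cdot,0)$, and you then take $P$ to be the homogeneous part of $F$ of that degree. This fails whenever the lowest-order part of $F$ is pluriharmonic: for $F=\Real(z_2^3)+|z_2|^{10}$ the function $F$ vanishes to order $3$, but the change of variables $z_1\mapsto z_1+z_2^3/2$ shows the type is $10$. Worse, with your choice $P=\Real(z_2^3)$ is harmonic, the model hypersurface is Levi-flat, the H\"ormander bracket condition fails, and the Rothschild--Stein machinery produces no subelliptic estimate for the model. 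The correct normalization first absorbs all pluriharmonic terms into $\Real(z_1)$, and the type is then the degree of the first non-harmonic homogeneous term of $F$ (equivalently, $2$ plus the vanishing order of $\lambda$, as you state later --- but these two characterizations you give are not equivalent to each other without the normalization). Beyond this, your necessity direction is only a scaling heuristic: the assertion that $\norm{\bar\partial\phi_t}^2+\norm{\bar\partial^*\phi_t}^2\lesssim\norm{\phi_t}^2$ is exactly the hard estimate (Greiner's construction), and you correctly flag that a careless choice of $\phi_t$ loses the sharp exponent, but nothing in the proposal actually carries out that construction.
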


Theorem \ref{T2} establishes in $\C^2$ a precise relation between the order $\epsilon$ of subellipticity at $p$ and the measure of the type of $b\Omega$ at $p$. In the next section we show that no lower bound for the order of subellipticity in terms of the type can be obtained through the Kohn algorithm. This ``lack of effectiveness" of Kohn's procedure is well known in higher dimensions \cite{CD10}.  

 \section{Ineffectiveness of the Kohn Algorithm in $\C^2$}\label{s3}
We work in $\C^2$, with variables $z$ and $w$. We will often use subscript notation for derivatives, that is, for a smooth function $f$, we write $f_z$ to denote its partial derivative with respect to $z$.

 Let $\Omega$ be a domain with smooth boundary $b\Omega$ defined locally at $0$ by $r<0$, where $r$ is a smooth function with $r_z(0)\neq 0$. We consider the standard 
local basis for the bundle $T^{1,0}b\Omega$ given by the tangential vector field $L=\partial_w-(r_w/r_z)\partial_z$. Note that in dimension two the definitions of allowable row and subelliptic multiplier coincide. In particular, the matrix of the Levi form in the basis $L$ consists of one single function, which we call $\lambda$. 

Since we consider only domains with real analytic boundary, we carry out our computations in the ring of germs of real analytic functions $\mathcal{A}_0$. Accordingly, the notation $\mathcal{I}(\,\,)$ stands for the ideal generated in $\mathcal{A}_0$ by the elements appearing inside the parentheses. 

\begin{lemma}\label{reallemma}
Consider a pseudoconvex domain $\Omega$ in $\C^2$ locally defined near the origin by
\begin{equation}\label{defining}
2\Real(z)+|f(z,w)|^2<0,
\end{equation}
where $f$ is a holomorphic function defined in a neighborhood of $0$. The first step of the Kohn algorithm gives the ideals of subelliptic multipliers \[I_1^{\sharp}=\mathcal{I}\big{(} 2\Real(z)+|f|^2,|f_w|^2\big{)},\quad I_1=\rad_{\R}I_1^{\sharp}.\] 
\end{lemma}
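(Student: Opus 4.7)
The plan is to compute the Levi form explicitly for the defining function $r = 2\Real(z) + |f|^2$ and verify that, up to a unit, $\det\lambda$ coincides with $|f_w|^2$. The desired description of $I_1^{\sharp}$ and $I_1$ will then follow immediately from the definitions recalled at the end of Section \ref{s2}.

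Since the origin lies on $b\Omega$, we have $f(0)=0$, hence $r_z(0)=1\neq 0$. Thus $L=\partial_w - (r_w/r_z)\partial_z$ is a well-defined smooth tangential $(1,0)$ vector field near $0$ spanning $T^{1,0}b\Omega$. Because $n=2$, the Levi ``matrix'' in this frame is the single scalar function $\lambda = \langle \partial\bar\partial r, L\wedge \bar L\rangle$, and $\det\lambda$ is simply $\lambda$ itself. So it suffices to show that $\lambda$ and $|f_w|^2$ generate the same ideal in $\mathcal{A}_0$.

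I would then carry out a direct expansion using
\[
\lambda \;=\; \frac{1}{|r_z|^2}\bigl(|r_w|^2 r_{z\bar z} - r_w \bar r_z\, r_{z\bar w} - \bar r_w r_z\, r_{w\bar z} + |r_z|^2 r_{w\bar w}\bigr),
\]
substituting the explicit derivatives
\[
r_z = 1+f_z\bar f,\quad r_w = f_w\bar f,\quad r_{z\bar z}=|f_z|^2,\quad r_{z\bar w}=f_z\bar f_w,\quad r_{w\bar z}=f_w\bar f_z,\quad r_{w\bar w}=|f_w|^2.
\]
After collecting terms, every contribution carries the common factor $|f_w|^2$, and the pieces containing $|f|^2|f_z|^2$, $f_z\bar f$, and $\bar f_z f$ cancel in pairs, leaving the clean identity
\[
\lambda \;=\; \frac{|f_w|^2}{|r_z|^2}.
\]
Since $r_z(0)=1$, the factor $|r_z|^{-2}$ is a unit in $\mathcal{A}_0$, so $\lambda$ and $|f_w|^2$ differ by a unit and therefore generate the same ideal. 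Consequently
\[
I_1^{\sharp} \;=\; \mathcal{I}(r,\det\lambda) \;=\; \mathcal{I}\bigl(2\Real(z)+|f|^2,\,|f_w|^2\bigr),
\]
and $I_1 = \rad_{\R} I_1^{\sharp}$ by the definition of the first step of the Kohn algorithm.

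The only real work is the Levi form expansion; this is routine but requires careful bookkeeping of the six terms of $\partial\bar\partial r$ paired against $L\wedge\bar L$ to see the cancellations. Nothing deeper is needed, as the conclusion about $I_1^{\sharp}$ and $I_1$ is purely a matter of reading off the definitions once $\lambda$ has been computed.
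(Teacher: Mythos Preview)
Your proposal is correct and follows essentially the same route as the paper: a direct computation of the Levi form for $r=2\Real(z)+|f|^2$, followed by reading off $I_1^{\sharp}$ and $I_1$ from the definitions. The only cosmetic difference is normalization: the paper invokes the formula $\lambda=r_{w\bar w}|r_z|^2+r_{z\bar z}|r_w|^2-2\Real(r_{z\bar w}r_w r_{\bar z})$ from \cite[Chapter~3, Proposition~1]{D93} (with the $|r_z|^2$ already cleared) and obtains $\lambda=|f_w|^2$ on the nose, whereas you keep the factor $|r_z|^{-2}$ and then discard it as a unit in $\mathcal{A}_0$; the resulting ideal is of course the same.
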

\begin{proof}
Let $r$ be a smooth local defining function at $0$ for a domain in $\C^2$, with $r_z(0)\neq 0$. By Proposition 1 in Chapter 3 of \cite{D93} we have
\begin{equation}\label{leviprima}
\lambda=r_{w\bar{w}}r_zr_{\bar{z}}+r_{z\bar{z}}r_wr_{\bar{w}}-2\Real(r_{z\bar{w}}r_wr_{\bar{z}}).
\end{equation}
For $r=2\Real(z)+|f(z,w)|^2$, where $f$ is a holomorphic function, \eqref{leviprima} gives
\begin{equation}\label{2.3}
\lambda=|f_w|^2\Big{(}|1+f_z\bar{f}|^2+|ff_z|^2-2\Real\big{[}(1+f_z\bar{f})ff_{\bar{z}}\big{]}\Big{)}=|f_w|^2.
\end{equation}
Hence the first step of the Kohn algorithm yields \[I_1^{\sharp}=\mathcal{I}\big{(} r, \lambda\big{)}=\mathcal{I}\big{(} 2\Real(z)+|f|^2,|f_w|^2\big{)}.\] 

\end{proof}

\begin{proof}[Proof of Proposition \ref{mainpro}]
Recall that, of the curves with maximal order of contact, at least one must lie in the holomorphic tangent space \cite[page 128]{D93}. Hence the complex line $\gamma(t)=(0,t)$ achieves the maximum order of contact at $0$ with the boundary of $\Omega$. The type at $0$ is therefore equal equal to $2\tau$, and in particular is independent of $k$. We now show that the lower bound for the order of subellipticity given by the Kohn algorithm depends on $k$.

Let $r=2\Real(z)+|f|^2$, where $f=w^{\tau}+z^kw^l$. By Lemma \ref{reallemma} we have
\begin{equation*}
I_1^{\sharp}=\mathcal{I}\big{(} 2\Real(z)+|f|^2,|f_w|^2\big{)}.
\end{equation*}
Note that $f_w=w^{l-1}h$, with $h=\tau w^{\tau-l}+lz^k$. Hence 
\begin{equation}\label{3.4}
\mathcal{I}(2\Real(z)+|f|^2,wh,\bar{w}\bar{h})\subseteq I_1=\rad_{\R}I_1^{\sharp}.
\end{equation}
The elements of $I_1$ are all the germs in $\mathcal{A}_0$ that vanish on the common zeros of $2\Real(z)+|f|^2$ and $wh$. Taking into account that $wh$ does not divide $f$, one can prove that such elements have to be in $\mathcal{I}(2\Real(z)+|f|^2,wh,\bar{w}\bar{h})$. Hence equality holds in \eqref{3.4}.

 We know that $r$ is a multiplier of order $1$ (Lemma \ref{lemmar}) and $\lambda=|f_w|^2$ is a multiplier of order $1/2$ (Lemma \ref{lemmalambda}). A general element of $I_1^{\sharp}$ is therefore a multiplier of order at least $1/2$. Since $I_1$ is obtained by taking a radical of $I_1^{\sharp}$ of order $2(l-1)$, Lemma \ref{lemmarad} implies that a general element of $I_1$ is a multiplier of order at least $1/(4l-4)$.
 
 At the next step of the algorithm, we form the ideal $I_2^{\sharp}$ by adding to the list of generators of $I_1$ all the expressions of the form $Lh$, where $h\in I_1$. It is readily proved that it is enough to consider $h$ belonging to a set of generators of $I_1$. Note that $r_w=f_w\bar{f}\in I_1\subset I_2^{\sharp}$. Since $Lh=h_w+(r_z)^{-1}(r_wh_z)$, we have that $Lh\in I_2^{\sharp}$ implies $h_w\in I_2^{\sharp}$. Hence
\begin{equation*}
\begin{split}
I_2^{\sharp}&=\mathcal{I}\big{(}2\Real(z)+|w^{\tau}+z^kw^l|^2,\tau w^{\tau-l+1}+lz^kw,\tau \bar{w}^{\tau-l+1}+l\bar{z}^k\bar{w}, (\tau-l+1)\tau w^{\tau-l}+lz^k\big{)}.
\end{split}
\end{equation*}
It follows from the identity 
 \begin{equation*}
 (-\tau^2+\tau l)w^{\tau-l+1}=\tau w^{\tau-l+1}+lz^kw-w\big{[} (\tau-l+1)\tau w^{\tau-l}+lz^k\big{]}
 \end{equation*}
 that we can rewrite $I_2^{\sharp}$ as
 \[ I_2^{\sharp}=\mathcal{I}\big{(}2\Real(z)+|w^{\tau}+z^kw^l|^2, w^{\tau-l+1},\tau \bar{w}^{\tau-l+1}+l\bar{z}^k\bar{w}, (\tau-l+1)\tau w^{\tau-l}+lz^k\big{)}.\]
 
 By Lemma \ref{l1}, the elements of $I_2^{\sharp}$ are subelliptic multipliers of order at least $1/(8l-8)$. 
Since $ w^{\tau-l+1}\in I_2^{\sharp},$ we have $w\in I_2=\rad_{\R}I^{\sharp}_2$. Hence
\begin{equation*}
I_2=\rad_{\R}\mathcal{I}\big{(}2\Real(z), w,z^k\big{)}=\mathcal{I}\big{(}z,\bar{z}, w,\bar{w}\big{)}.
\end{equation*}
 Note that there exist no smooth functions $\alpha,\beta,\gamma$ such that 
\begin{equation}\label{order}
|z|^{k-1}\leq \alpha|\Real(z)|+\beta |z|^k+\gamma |w|
\end{equation}
holds in a neighborhood of the origin. Indeed, for every choice of $\alpha,\beta,\gamma$, the inequality \eqref{order} fails at points of $\C^2$ of the form $(ia,0)$ for $a$ a real number sufficiently close to zero. Hence a radical of order at least $k$ is needed to obtain $I_2$ from $I_2^{\sharp}$. By Lemma \ref{lemmarad}, the lower bound for the order of subellipticity of an element of $I_2$ drops to at least $1/(8lk-8k)$. 
\end{proof}

\begin{remark}
Following the computations above, one can prove that the  conclusions of Proposition \ref{mainpro} hold in a slightly more general setting. More precisely, for domains locally defined at the origin by $2\Real(z)+|w^{\tau}+z^kw^l+g(z)|^2<0$, where $g$ is a holomorphic function and $k>\tau> l>0, \tau> 2$, the second step of the Kohn algorithm requires a real radical of order at least $k$.
\end{remark}

\begin{remark}
It is easy to modify the Kohn algorithm to make it effective on the domains considered in Proposition \ref{mainpro}. One simply does not take any radical at the second step of the algorithm, and instead keeps producing new multipliers by applying the vector field $L$. Note that a real radical of order 2 in the first step is still needed. The effective procedure described in the next section generalizes this idea. 
\end{remark}


\section{An effective algorithm for a class of pseudoconvex domains in $\C^2$}\label{s4}
For a real analytic real hypersurface in $\C^n$, there is a convenient way to write a local defining equation in terms of absolute values of holomorphic functions. We briefly recall the results needed for our study of domains with real analytic boundary in $\C^2$. We refer to \cite[Section 3.3.1]{D93} for the general statements and proofs concerning the {\em holomorphic decomposition} of a defining function.

We start by introducing some notation. For an open set $U$ in $\C^2$, we denote by $l^2(U)$ the Hilbert space of square summable countable sequences of holomorphic functions in $U$. That is, an element $f\in l^2(U)$ is a sequence $(f_j)_{j=1}^{\infty}$ of holomorphic functions in $U$ such that 
\begin{equation}\label{norm}
\norm{f}^2=\sum_{j=1}^{\infty}|f_j|^2\,\,\text{ converges in } U.
\end{equation} 

\begin{remark}
Note that the norm just introduced does not involve integration, and should not be confused with the $L^2$ norm used in Section \ref{s2}.
\end{remark}

 The norm in \eqref{norm} is induced by a Hermitian inner product, denoted by $\langle\,,\,\rangle.$ For elements $f=(f_j)_{j=1}^{\infty}$ and $g=(g_j)_{j=1}^{\infty}$ in $l^2(U)$ we have
\begin{equation*}
\langle f,g\rangle=\sum_{j=1}^{\infty} f_j\bar{g}_j.
\end{equation*}

Given a bounded domain $\Omega$ in $\C^2$ with real analytic boundary $b\Omega$ and a point $p\in b\Omega$, we can always choose coordinates so that $p$ is the origin, and in a neighborhood $U$ of $0$ the domain $\Omega$ is defined by
\begin{equation}\label{defequ}
2\Real(z)+\norm{f}^2-\norm{g}^2<0
\end{equation}
for some $f,g\in l^2(U)$.

We will consider domains with real analytic boundary where the elements $f$ and $g$ resulting from a holomorphic decomposition \eqref{defequ} satisfy an additional assumption. Recall that for $h=(h_j)_{j=1}^{\infty}\in l^2(U)$ we denote by $h_w$ the element of $l^2(U)$ given by $(\partial_w h_j)_{j=1}^{\infty}.$

\begin{lemma}\label{mainlemma}
Let $\Omega$ be a domain in $\C^2$ defined in a neighborhood $U$ of $0$ by $r<0$, where
\begin{equation*}
r=2\Real(z)+\norm{f}^2-\norm{g}^2
\end{equation*} for some element $f,g\in l^2(U)$. Assume that there exists $\delta\in [0,1)$ such that, in $U$,
\begin{equation}\label{hypodue}
\norm{g_w}^2\leq\delta\norm{f_w}^2.
\end{equation} Then there exist a positive constant $C$ and a neighborhood $V$ of the origin in $\C^2$ such that
\begin{equation}\label{conclu}
\lambda\geq C\norm{f_w}^2 \text{ in } V.
\end{equation}
In particular, $\Omega$ is pseudoconvex near $0$.
\end{lemma}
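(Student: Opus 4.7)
The plan is to compute the Levi form directly from formula \eqref{leviprima} used in the proof of Lemma \ref{reallemma}, identify the dominant term at the origin, and absorb everything else using \eqref{hypodue} together with Cauchy--Schwarz in $l^2$.

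First I would record the relevant derivatives, writing $\langle\cdot,\cdot\rangle$ for the $l^2$ inner product:
\begin{equation*}
r_z = 1 + \langle f_z,f\rangle - \langle g_z,g\rangle,\qquad r_w = \langle f_w,f\rangle - \langle g_w,g\rangle,
\end{equation*}
\begin{equation*}
r_{w\bar w} = \norm{f_w}^2 - \norm{g_w}^2,\qquad r_{z\bar z} = \norm{f_z}^2 - \norm{g_z}^2,\qquad r_{z\bar w} = \langle f_z,f_w\rangle - \langle g_z,g_w\rangle.
\end{equation*}
Since the holomorphic decomposition is normalized so that the components of $f$ and $g$ vanish at $0$, we have $r_z(0)=1$ and $r_w(0)=0$. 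Hence on a small enough neighborhood $V$ of $0$, $|r_z|^2 \geq 1/2$ while $|r_w|$ can be made arbitrarily small.

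Substituting into \eqref{leviprima} gives
\begin{equation*}
\lambda \;=\; r_{w\bar w}|r_z|^2 \;+\; r_{z\bar z}|r_w|^2 \;-\; 2\Real\bigl(r_{z\bar w}\,r_w\,r_{\bar z}\bigr).
\end{equation*}
The assumption \eqref{hypodue} gives $r_{w\bar w} \geq (1-\delta)\norm{f_w}^2$, so the first term is at least $\tfrac{1-\delta}{2}\norm{f_w}^2$ on $V$. For the remaining two terms, Cauchy--Schwarz combined with \eqref{hypodue} yields the pointwise bounds
\begin{equation*}
|r_w|^2 \leq 2\bigl(\norm{f}^2 + \delta\norm{g}^2\bigr)\norm{f_w}^2,\qquad |r_{z\bar w}|^2 \leq 2\bigl(\norm{f_z}^2 + \delta\norm{g_z}^2\bigr)\norm{f_w}^2.
\end{equation*}
After an AM--GM split of the cross term, say $2|r_{z\bar w}\,r_w\,r_{\bar z}| \leq |r_{z\bar w}|^2|r_{\bar z}|^2 + |r_w|^2$, each error contribution is bounded by $\varepsilon(z,w)\norm{f_w}^2$ with $\varepsilon(z,w)\to 0$ as $(z,w)\to 0$, because $\norm{f}^2$ and $\norm{g}^2$ vanish at the origin and the remaining factors are bounded on $V$. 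Shrinking $V$ so that $\varepsilon < (1-\delta)/4$ yields \eqref{conclu} with $C = (1-\delta)/4$; since $\norm{f_w}^2\geq 0$, the lower bound $\lambda\geq 0$ then shows that $\Omega$ is pseudoconvex near $0$.

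The main technical issue is the cross term $-2\Real(r_{z\bar w}\,r_w\,r_{\bar z})$, which carries no obvious sign. It is controlled precisely because $r_w(0)=0$ and \eqref{hypodue} prevents $\norm{g_w}$ from overwhelming $\norm{f_w}$ in the Cauchy--Schwarz bounds; everything else is routine bookkeeping. Without \eqref{hypodue} the leading term $r_{w\bar w}|r_z|^2$ itself could fail to be non-negative, so the hypothesis is essential already at that step.
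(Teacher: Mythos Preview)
Your overall strategy matches the paper's proof exactly: plug the holomorphic decomposition into \eqref{leviprima}, use \eqref{hypodue} to get $r_{w\bar w}|r_z|^2\ge(1-\delta)\norm{f_w}^2|r_z|^2$ as the main term, and show that $r_{z\bar z}|r_w|^2$ and $2\Real(r_{z\bar w}r_wr_{\bar z})$ are each bounded by $\norm{f_w}^2$ times a factor that vanishes at $0$. The paper does precisely this, bounding the two error terms directly via Cauchy--Schwarz and \eqref{hypodue} and then shrinking the neighborhood.

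There is one genuine slip in your treatment of the cross term. After the AM--GM split $2|r_{z\bar w}r_wr_{\bar z}|\le |r_{z\bar w}|^2|r_{\bar z}|^2+|r_w|^2$, the piece $|r_{z\bar w}|^2|r_{\bar z}|^2$ is \emph{not} of the form $\varepsilon(z,w)\norm{f_w}^2$ with $\varepsilon\to 0$: your own bound gives $|r_{z\bar w}|^2\le 2(\norm{f_z}^2+\delta\norm{g_z}^2)\norm{f_w}^2$, but $\norm{f_z}^2$ and $\norm{g_z}^2$ need not vanish at the origin (take $f=(z,w)$, $g=0$), and $|r_z|^2\to 1$. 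So the justification ``because $\norm{f}^2$ and $\norm{g}^2$ vanish at the origin'' does not apply to this piece. The fix is trivial and already implicit in what you wrote: drop the AM--GM step and simply multiply the first-order bounds $|r_{z\bar w}|\le C_1\norm{f_w}$ (with $C_1$ bounded near $0$) and $|r_w|\le \varepsilon_1\norm{f_w}$ (with $\varepsilon_1\to 0$) to get $2|r_{z\bar w}r_wr_{\bar z}|\le 2C_1\varepsilon_1|r_z|\,\norm{f_w}^2$, which is $o(1)\norm{f_w}^2$. That is essentially how the paper handles it.
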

\begin{remark}\label{Examplenn}
Condition \eqref{hypodue} is strictly stronger than pseudoconvexity. Consider for instance a domain in $\C^2$ defined near the origin by
\[2\Real(z)+|w+w^k|^2+|w^2|^2-|w|^2<0,\]
where $k>4$ is an integer. For this domain we have \[f=(w+w^k,w^2),\quad g=(w),\]
and therefore \begin{equation*}
\norm{f_w}^2=|1+kw^{k-1}|^2+|2w|^2,\quad \norm{g_w}^2=1.
\end{equation*}
Assume that there exists a positive constant $\delta$ such that near the origin we have
\begin{equation}\label{gotozero}
1=\norm{g_w}^2\leq \delta\norm{f_w}^2=\delta \Big(|1+kw^{k-1}|^2+|2w|^2\Big).
\end{equation}
Letting $|w|\to 0$ in \eqref{gotozero}, we see that $\delta\geq 1$. Hence the domain does not satisfy condition \eqref{hypodue}. 
A computation of the Levi form $\lambda$ yields
\[\lambda=\norm{f_w}^2-\norm{g_w}^2=2\Real(kw^{k-1})+k^2|w^{k-1}|^2+4|w|^2,\]
which is non-negative for $w$ close to $0$. The domain is therefore pseudoconvex near the origin.
\end{remark}

\begin{proof}[Proof of Lemma \ref{mainlemma}]
By Proposition 1 in Chapter 3 of \cite{D93} we have
\begin{equation}\label{leviseconda}
\lambda=r_{w\bar{w}}r_zr_{\bar{z}}+r_{z\bar{z}}r_wr_{\bar{w}}-2\Real(r_{z\bar{w}}r_wr_{\bar{z}}).
\end{equation}
We compute each of the terms in \eqref{leviseconda}. The first one is
\begin{equation*}
r_{w\bar{w}}r_zr_{\bar{z}}=\big(\norm{f_w}^2-\norm{g_w}^2\big)\big{|}1+\langle f_z,f \rangle-\langle g_z,g \rangle\big{|}^2.
\end{equation*}
Exploiting \eqref{hypodue}, we obtain the estimate
\begin{equation}\label{prima}
r_{w\bar{w}}r_zr_{\bar{z}}\geq \norm{f_w}^2(1-\delta)|1+H|^2,
\end{equation}
where $H=\langle f_z,f \rangle-\langle g_z,g \rangle.$ Note that $H(0)=0$. We now compute the second term of \eqref{leviseconda}.
\begin{equation*}
\begin{split}
r_{z\bar{z}}r_wr_{\bar{w}}&=\big(\norm{f_z}^2-\norm{g_z}^2\big)\big{|}\langle f_w,f \rangle-\langle g_w,g \rangle\big{|}^2\\&=\big(\norm{f_z}^2-\norm{g_z}^2\big)\big( |\langle f_w,f\rangle|^2+|\langle g_w,g \rangle|^2-\langle f_w,f\rangle\langle g,g_w\rangle-\langle g_w,g\rangle\langle f,f_w\rangle\big).
\end{split}
\end{equation*}
The Cauchy-Schwarz inequality gives
\begin{equation}\label{y}
|r_{z\bar{z}}r_wr_{\bar{w}}|\leq \big{|}\norm{f_z}^2-\norm{g_z}^2\big{|}\big( \norm{f_w}^2\norm{f}^2+\norm{g_w}^2\norm{g}^2+2\norm{f_w}\norm{g_w}\norm{f}\norm{g}\big).
\end{equation}
By \eqref{hypodue} we have
\begin{equation}\label{star}
2\norm{f_w}\norm{g_w}\leq \norm{f_w}^2+\norm{g_w}^2\leq (1+\delta)\norm{f_w}^2.
\end{equation}
Combining \eqref{star} and \eqref{hypodue} with \eqref{y} yields
\begin{equation*}
|r_{z\bar{z}}r_wr_{\bar{w}}|\leq \big{|}\norm{f_z}^2-\norm{g_z}^2\big{|}\norm{f_w}^2 \big(\norm{f}^2+\norm{g}^2\delta+\norm{f}\norm{g}(1+\delta)   \big).
\end{equation*}
We conclude that
\begin{equation}\label{seconda}
|r_{z\bar{z}}r_wr_{\bar{w}}|\leq \norm{f_w}^2 K,
\end{equation}
for some $K\in\mathcal{A}_0$ with $K(0)=0$.
For the last term in \eqref{leviseconda}, we have
\begin{equation*}
r_{z\bar{w}}r_wr_{\bar{z}}=\big(\langle f_z,f_w\rangle-\langle g_z,g_w\rangle\big)\big(\langle f_w,f\rangle-\langle g_w,g\rangle\big)\big(1+\langle f,f_z\rangle-\langle g,g_z\rangle\big).
\end{equation*}
Distributing the first product, we get 
\begin{equation*}
\begin{split}
|r_{z\bar{w}}r_wr_{\bar{z}}|= &\big{|}\langle f_z,f_w\rangle \langle f_w,f\rangle+\langle g_z,g_w\rangle \langle g_w,g\rangle\\&-\langle
f_z,f_w\rangle \langle g_w,g\rangle-\langle g_z,g_w\rangle \langle f_w,f\rangle\big{|} \big{|}1+\langle f,f_z\rangle-\langle g,g_z\rangle\big{|}.\\
\end{split}
\end{equation*}
The Cauchy-Schwarz inequality then yields
\begin{equation*}
\begin{split}
|r_{z\bar{w}}r_wr_{\bar{z}}|\leq &\Big(\norm{f_z}\norm{f_w}^2\norm{f}+\norm{g_z}\norm{g_w}^2\norm{g}\\&+\norm{f_w}\norm{g_w}\big(\norm{f_z}\norm{g}+\norm{g_z}\norm{f}\big)\Big)\big{|}1+\langle f,f_z\rangle-\langle g,g_z\rangle\big{|}.
\end{split}
\end{equation*}
Exploiting \eqref{hypodue} and \eqref{star}, we obtain
\begin{equation}\label{terza}
|r_{z\bar{w}}r_wr_{\bar{z}}|\leq \norm{f_w}^2 R,
\end{equation}
where $R\in\mathcal{A}_0$ with $R(0)=0$.
Choose a small $\epsilon>0$, and let $V$ be a neighborhood of the origin such that $|H|<\epsilon$ in $V$ and $|K|,|R|<\epsilon(1-\delta)$ in $V$.
Combining \eqref{prima}, \eqref{seconda}, and \eqref{terza} with \eqref{leviseconda}, we have that in $V$ the following inequality holds:
\begin{equation*}
\lambda\geq \norm{f_w}^2(1-\delta)(1-\epsilon)-3\norm{f_w}^2\epsilon(1-\delta)=\norm{f_w}^2(1-\delta)(1-4\epsilon). 
\end{equation*}
Letting $C=(1-\delta)(1-4\epsilon)$, we obtain \eqref{conclu}, as desired.
\end{proof}
\begin{proof}[Proof of Theorem \ref{mainteo}]

By Lemma \ref{mainlemma} there exists a neighborhood $V$ of the origin in $\C^2$ and a constant $C>0$ such that, for every $j$, we have
\begin{equation*}
|\partial_wf_j|^2\leq C^{-1} \lambda \quad \text{in }V.
\end{equation*} 
By Lemmas \ref{lemmarad} and \ref{lemmalambda}, each element $\partial_wf_j$ is a subelliptic multiplier of order $1/4$. By hypothesis, $\norm{g_w}^2<\norm{f_w}^2$ near $0$. Hence, after possibly shrinking $V$, we have
\begin{equation*}
|\partial_wg_j|^2\leq C^{-1} \lambda \quad \text{in }V.
\end{equation*}
The elements $\partial_wg_j$ are therefore subelliptic multipliers of order $1/4$.
Recall that we can produce new multipliers by considering $Lh$, where $h$ is a multiplier and $L$ is the tangential vector field $L=\partial_w-(r_w/r_z)\partial_z$. Note that $r_w=\langle f_w,f\rangle-\langle g_w,g\rangle$ is itself a multiplier, being in the ideal generated in $\mathcal{A}_0$ by the elements $\partial_wf_j$ and $\partial_wg_j$. Hence $Lh$ being a multiplier implies that $\partial_w h$ is a multiplier. 

For a domain defined by \eqref{define}, the maximum order of contact is achieved by the complex line $\gamma(t)=(0,t)$. Let $r$ be the defining function appearing in \eqref{defequ}. Recall that the type of the boundary at $0$ is equal to $2\tau$.  By \cite[Proposition 2]{D93} the coefficient of $|t|^{2\tau}$ in $(r\circ\gamma)(t)$ is positive. Hence at least one of the functions $f_j$ vanishes to order $\tau$ at $0$ along $\gamma$. 

We are ready to describe our effective procedure to generate a sequence of subelliptic multipliers. Let $\zeta_1=\partial_wf_k$, where $f_k$ is a component of $f$ vanishing to order $\tau$ along $\gamma$. Now let $\zeta_j=\partial_w \zeta_{j-1}$ for $j\geq 2$. The arguments at the beginning of this proof show that every $\zeta_j$ is a subelliptic multiplier. By the choice of $f_k,$ the element $\zeta_{\tau}$ is a unit in $\mathcal{A}_0$, as well as being a multiplier of order at least $(2^{\tau+1})^{-1}$. 
\end{proof}
\section{Extension to higher dimensions} We indicate how to extend the results of this paper to higher dimensions. We work in $\C^{n+1}$ with variables $z,w_1,\dots,w_n$. For a domain with real analytic boundary in $\C^{n+1}$ locally defined by $r=0$, we consider a holomorphic decomposition 
\begin{equation*}
r=2\Real(z)+\norm{f(z,w)}^2-\norm{g(z,w)}^2,
\end{equation*} where $f$ and $g$ are square summable sequences of holomorphic functions defined in a neighborhood of $0$ in $\C^{n+1}$. Condition \eqref{hypo} is then replaced by 
\begin{equation}\label{newcond}
\big(\langle g_{w_i},g_{w_j}\rangle_{ij}\big)\leq\delta\big(\langle f_{w_i},f_{w_j}\rangle_{ij}\big)\text{ near $0$}\text{ for some }\delta\in[0,1).
\end{equation}
Here $\big(\langle g_{w_i},g_{w_j}\rangle_{ij}\big)$ and $\big(\langle f_{w_i},f_{w_j}\rangle_{ij}\big)$ stand for the Hessians in the $w$ variables of the real analytic functions $\norm{g}^2$ and $\norm{f}^2$ respectively, and the inequality is intended in the sense of matrices. As usual, for matrices $A$ and $B$ we write $A\geq B$ if and only if $A-B$ is positive semidefinite. In analogy with Lemma \ref{mainlemma}, condition \eqref{newcond} implies that there exist a positive constant $C$ and a neighborhood $V$ of the origin such that 
\begin{equation}\label{neq}
\big(\lambda_{ij}\big)\geq C\,\big(\langle f_{w_i}, f_{w_j}\rangle_{ij}\big) \text{ in } V.
\end{equation}
Hence the Hessian in the $w$ variables of $\norm{f}^2$ serves as a positive semidefinite lower bound for the Levi form. It follows that the domain is pseudoconvex near 0.

When $g\equiv 0$ and $f$ is independent of $z$ (the ``rigid sum of squares" case), the Hessian $\big(\langle f_{w_i}, f_{w_j}\rangle_{ij}\big)$ equals the Levi form $\big(\lambda_{ij}\big)$, which is an allowable matrix by Lemma \ref{lemmalambda}. Note that $\big(\langle f_{w_i}, f_{w_j}\rangle_{ij}\big)=(D_wf)^*D_wf$, where $D_wf$ is the Jacobian of $f$ in the $w$ variables and $(D_wf)^*$ is its Euclidean adjoint. It follows from the identity $\lambda=(D_wf)^*D_wf$ that the matrix $D_wf$ is also allowable (see \cite{K79} and \cite[Chapter 6]{D93}), and it becomes the starting point for the Kohn algorithm. 

Consider now the more general situation in which $g$ is not necessarily zero, but \eqref{newcond} is satisfied. One can prove using \eqref{neq} that the Jacobian matrix $D_wf$ is still allowable in this case (see \cite[Chapter 4]{F20} for details), and can therefore be taken as the starting point for a multiplier algorithm, as in the rigid sum of squares setting.

 In this paper we have exploited a known effective version of the Kohn algorithm for the rigid sum of squares case in $\C^2$ (consisting in only taking derivatives) to establish an effective procedure to prove subellipticity for domains of finite type with real analytic boundary satisfying \eqref{hypo}. In the author's PhD thesis \cite{F20} these ideas are generalized to higher dimensions in the way described above. It is shown that results on the effectiveness of the Kohn algorithm for the rigid sum of squares case can be translated to the more general setting of finite type domains with real analytic boundary satisfying \eqref{newcond}. 

\bibliographystyle{alpha}
 							\section{Acknowledgements}
 							This work constitutes part of the author's PhD thesis. He would like to thank his advisor John D'Angelo for many fruitful discussions, and for suggesting the example in Remark \ref{Examplenn}. The author also acknowledges many helpful comments from the referee. This research was conducted while the author was partially supported on a Golub Research Assistanship by the Mathematics Department of the University of Illinois at Urbana-Champaign and by NSF Grant DMS 13-61001 of John D'Angelo.
					
\end{document}